\theoremstyle{plain}
\newtheorem{thm}{Theorem}
\newtheorem{lem}{Lemma}
\theoremstyle{definition}
\newtheorem{defi}{Definition}
\newtheorem{rema}{Remark}
\newcommand{\Z}{\mathbb{Z}^+}
\newcommand{\C}{\mathbb{C}}
\newcommand{\Q}{\mathbb{Q}^+}
\newcommand{\Pfin}{\mathcal{P}^{fin} \left( \mathbb{Z}^+ \right)}
\begin{document}

\title{On a theorem of Hildebrand}
\author{Carsten Dietzel}
\email{carstendietzel@gmx.de}
\address{Institute of algebra and number theory, University of Stuttgart, Pfaffenwaldring 57, 70569 Stuttgart, Germany}
\date{\today}

\begin{abstract}
We prove that for each multiplicative subgroup $A$ of finite index in $\Q$, the set of integers $a$ with $a, a+1 \in A$ is an IP-set. This generalizes a theorem of Hildebrand concerning completely multiplicative functions taking values in the $k$-th roots of unity.
\end{abstract}

\maketitle

A theorem of Hildebrand \cite[Theorem 2]{hildebrand1991}, which was essential in answering a question of Lehmer, Lehmer and Mills on consecutive power residues (\cite{lehmer_lehmer_mills}) can be formulated as follows:

\begin{thm}[Hildebrand]
Fix some $k \in \Z$. If $f: \Z \to \C $ is a completely multiplicative function (i.e. $f(mn) = f(m)f(n)$ for all $m,n \in \Z$) taking its values in the $k$-th roots of unity then the set of $a \in \Z$ fulfilling $f(a) = f(a+1) = 1$ is nonempty.
\end{thm}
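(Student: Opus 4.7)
Set $A = f^{-1}(1)$, viewed as a subgroup of $\Q$ by extending $f$ completely multiplicatively to positive rationals. Then $A$ is a subgroup of $\Q$ of finite index $d$ dividing $k$, and the theorem reduces to finding $a \in \Z$ with $a, a+1 \in A$. The quotient $\Q/A$ is a finite abelian group of exponent at most $d$; in particular every $d$-th power of a positive integer lies in $A$.

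A first natural attempt is pigeonhole on the pairs $\bigl(a \bmod A,\ (a+1) \bmod A\bigr) \in (\Q/A)^2$. Some pair must be hit by infinitely many $a$; if that pair were $(1,1)$ we would be done, but in general for two such $a < b$ we only recover $b/a,\, (b+1)/(a+1) \in A$, which is strictly weaker than what we need. This indicates that a purely combinatorial pigeonhole is insufficient: consecutive integers form an additive notion, whereas $A$ is defined multiplicatively, so some bridge between the two structures is required.

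The approach I would therefore pursue is the stronger statement advertised by the abstract: show that $\{a \in \Z : a, a+1 \in A\}$ is an IP-set, i.e.\ contains all finite sums of the form $\sum_{i \in F} n_i$ for some infinite sequence $(n_i)_{i \ge 1}$ and all nonempty finite $F$. Hildebrand's theorem then follows immediately, since IP-sets are nonempty. To produce the generators, I would proceed inductively: having chosen $n_1, \ldots, n_m$ such that every partial sum $s = \sum_{i \in F} n_i$ with $\emptyset \ne F \subseteq \{1,\ldots,m\}$ satisfies $s, s+1 \in A$, one seeks $n_{m+1}$ for which, for every previous partial sum $s$ (including $s = 0$), both $s + n_{m+1}$ and $s + n_{m+1} + 1$ still lie in $A$.

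The main obstacle is precisely this inductive step: one must simultaneously control the residues modulo $A$ of $s + n_{m+1}$ and $s + n_{m+1} + 1$ for every partial sum $s$ previously formed. Since these amount to $2 \cdot 2^m$ constraints in the finite group $\Q/A$, one needs a density or equidistribution statement showing that $(n, n+1) \bmod A$ visits the prescribed class $(s^{-1}, (s+1)^{-1})$ sufficiently often, uniformly in the shift $s$. Establishing such a joint equidistribution of consecutive multiplicative residues, presumably via a combination of Hindman's theorem and structural properties of finite-index subgroups of $\Q$, is where I expect the real technical work of the proof to lie.
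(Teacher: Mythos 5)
Your reduction to the finite-index subgroup formulation is correct, and you have correctly identified the target statement (that $\{a : a, a+1 \in A\}$ is an IP-set, proved via Hindman's theorem). But the proposal stops exactly where the proof has to begin: the inductive step you describe --- choosing $n_{m+1}$ so that $s + n_{m+1}$ and $s + n_{m+1} + 1$ lie in $A$ for all $2^m$ previous partial sums $s$ --- is the entire content of the theorem, and the ``joint equidistribution of consecutive multiplicative residues'' you invoke to carry it out is not formulated, let alone proved. Indeed, controlling the coset of $n$ and $n+1$ simultaneously is precisely the original difficulty, so deferring to such a statement is circular. There is no element-by-element induction in the actual argument, and no equidistribution input of any kind.

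The missing idea is the \emph{block-divisible sequence}, which supplies exactly the additive--multiplicative bridge you correctly sensed was needed. One constructs a strictly increasing sequence $s_n$ such that the block sum $s_F := \sum_{n \in F} s_n$ divides $s_G$ whenever $\max F < \min G$ (take $s_{n+1}$ to be the product of all block sums formed from $\{0,\dots,n\}$). Now color each finite set $F$ by the coset of $s_F$ in $\Q/A$ and apply Hindman's theorem in its \emph{finite-unions} form to get blocks $F_1 \prec F_2 \prec \cdots$ all of whose unions receive the same color. Setting $b_i := s_{F_i}$, every finite sum of the $b_i$ lies in a single coset of $A$ and is divisible by $b_1$; dividing by $b_1$ places $1$ in the resulting FS-set, and since all its members lie in one coset and one of them is $1$, that coset is $A$ itself. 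Hence $FS(1, a_2, a_3, \dots) \subseteq A \cap \Z$, which simultaneously yields $FS(a_2, a_3, \dots)$ and $FS(a_2, a_3, \dots) + 1$ inside $A \cap \Z$, i.e.\ the desired IP-set. The divisibility built into the sequence is what lets one pass from ``same coset'' to ``in $A$'' by taking an integer ratio --- this is the step your pigeonhole discussion correctly identified as the obstruction, and it is resolved structurally rather than by any density argument.
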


\begin{rema}
Hildebrand actually proved more, i.e. there is a constant $c(k)$, independent on the specific multiplicative function $f$, and an $a \in \Z$ such that $a \leq c(k)$ and $f(a) = f(a+1) = 1$. However, by a standard compactness argument, these versions can be seen to be equivalent.
\end{rema}

It makes sense to restate Hildebrand's result as follows:

\begin{thm}[Hildebrand] \label{thm:main_hildebrand}
Let $A \leq \Q$ be a (multiplicative) subgroup such that $\Q / A$ is cyclic of finite order. Let $A^{\ast} := A \cap \Z$. Then $A^{\ast} \cap (A^{\ast} - 1)$ is nonempty.
\end{thm}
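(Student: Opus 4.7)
The plan is to show that this statement is just a repackaging of the original version of Hildebrand's theorem through the standard character--function dictionary. Let $k$ denote the order of the finite cyclic group $\Q / A$, and fix an isomorphism $\chi : \Q / A \xrightarrow{\sim} \mu_k$ onto the group of $k$-th roots of unity in $\C$; such a $\chi$ exists because any two cyclic groups of the same finite order are abstractly isomorphic.

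Next I would define $f : \Z \to \C$ by $f(n) := \chi(nA)$. Since $\chi$ is a group homomorphism and the inclusion $\Z \hookrightarrow \Q$ is multiplicative, $f$ is completely multiplicative, and by construction its image is contained in $\mu_k$. The key observation is that $f(n) = 1$ if and only if $nA = A$, i.e.\ $n \in A$, which for $n \in \Z$ means exactly that $n \in A^{\ast}$.

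Applying the original Hildebrand theorem to $f$ then yields some $a \in \Z$ with $f(a) = f(a+1) = 1$, and by the equivalence just noted this says $a, a+1 \in A^{\ast}$. Thus $a \in A^{\ast} \cap (A^{\ast} - 1)$ and the intersection is nonempty.

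No substantial obstacle is anticipated here: the two formulations are carrying exactly the same information, and the argument amounts to the standard correspondence between finite-order homomorphisms out of $\Q$ and completely multiplicative $\mu_k$-valued functions on $\Z$. The purpose of the reformulation is presumably to install the group-theoretic language in which the stronger IP-set result advertised in the abstract is to be proved.
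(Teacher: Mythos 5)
Your reduction is correct. Fixing an isomorphism $\chi : \Q/A \to \mu_k$ and setting $f(n) := \chi(nA)$ does produce a completely multiplicative function on $\Z$ with values in the $k$-th roots of unity, and $f(n)=1$ holds exactly when $n \in A$, so the function-theoretic version of Hildebrand's theorem hands you $a$ with $a, a+1 \in A^{\ast}$, i.e.\ $a \in A^{\ast} \cap (A^{\ast}-1)$. The paper gives no separate proof of this restatement --- it treats the equivalence with the quoted statement about completely multiplicative functions as routine, and your argument is precisely that routine dictionary spelled out, so on that reading you match the paper. It is worth noting, however, that inside the paper the statement also follows from the more general \autoref{thm:main}: a subgroup with finite cyclic quotient in particular has finite index, and an IP-set is in particular nonempty. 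That second route, via Hindman's theorem and block-divisible sequences, constitutes a new elementary proof that does not rely on Hildebrand's original analytic argument, whereas your reduction imports Hildebrand's theorem as a black box; your version buys brevity and makes the equivalence of formulations explicit, while the paper's machinery buys independence from the analytic proof and the stronger IP-set conclusion.
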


The original proof made use of analytic methods and was rather long. We will give a short elementary proof of a more general theorem.

However, before we can state (and prove) our generalization we need some notation and the set-theoretical version of Hindman's theorem:

We denote by $\Pfin$ the set of finite, non-empty subsets of $\Z$.

For $A,B \in \Pfin$ write $A \prec B$ iff $\max A < \min B$.

Furthermore, for a sequence $A_1 \prec A_2 \prec \ldots$ in $\Pfin$, we define
\[
FU((A_i)_{i \in \Z}) = \left\{ \bigcup_{i \in I} A_i \, : \, I \subseteq \Z, 0 < \vert I \vert < \infty \right\}
\]

Similarly, for a sequence $a_1, a_2, \ldots$ in $\Z$, we define
\[
FS((a_i)_{i \in \Z}) = \left\{ \sum_{i \in I} a_i \, : \, I \subseteq \Z, 0 < \vert I \vert < \infty \right\}
\]

We call a set $M \subseteq \Z$ an \emph{IP-set} (\cite[Definition 16.3]{hindman_strauss}) if there is a sequence $a_1,a_2, \ldots$ in $\Z$ such that $FS((a_i)_{i \in \Z}) \subseteq M$.

Then Hindman's theorem on partitions of $\Pfin$ (\cite[Corollary 5.17]{hindman_strauss}) can be stated as follows:

\begin{thm}[Hindman] \label{thm:hindman}
For any finite partition $\Pfin = M_1 \uplus M_2 \uplus \ldots M_n$ there are sets $A_1 \prec A_2 \prec \ldots $ and $1 \leq j \leq k$ such that
\[
FU((A_i)_{i \in \Z} \subseteq M_j.
\]
\end{thm}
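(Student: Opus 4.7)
The plan is to apply the Galvin--Glazer ultrafilter argument. The semigroup $(\Pfin, \cup)$ is commutative and associative, so $\cup$ extends to the Stone--\v{C}ech compactification $\beta \Pfin$ (identified with the space of ultrafilters on $\Pfin$) by
\[
p \cup q := \bigl\{ M \subseteq \Pfin \, : \, \{A \in \Pfin : \{B \in \Pfin : A \cup B \in M\} \in q\} \in p\bigr\};
\]
equipped with this operation, $\beta \Pfin$ is a compact right-topological semigroup. To enforce the condition $\min A_{i+1} > \max A_i$, I would work inside the closed subsemigroup
\[
S = \bigcap_{n \in \Z} \overline{\{A \in \Pfin \, : \, \min A > n\}} \subseteq \beta \Pfin,
\]
consisting of those ultrafilters that concentrate on sets with arbitrarily large minimum.

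By Ellis's theorem, $S$ contains an idempotent $p = p \cup p$. Given the partition, exactly one class $M_j$ lies in $p$. A short computation using $p \cup p = p$ yields that
\[
M_j^{\star} := \{A \in M_j \, : \, \{B \in \Pfin : A \cup B \in M_j\} \in p\}
\]
belongs to $p$, and that $\{B \in \Pfin : A \cup B \in M_j^{\star}\} \in p$ for every $A \in M_j^{\star}$.

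From here the sequence $A_1 \prec A_2 \prec \ldots$ is built inductively. Having chosen $A_1 \prec \ldots \prec A_k$ in $M_j^{\star}$ with every non-empty finite union again in $M_j^{\star}$, the set
\[
\{B \in \Pfin : \min B > \max A_k\} \, \cap \, M_j^{\star} \, \cap \bigcap_{\emptyset \neq I \subseteq \{1, \ldots, k\}} \Bigl\{B \in \Pfin : \bigl(\bigcup_{i \in I} A_i \bigr) \cup B \in M_j^{\star}\Bigr\}
\]
is a finite intersection of members of $p$, hence lies in $p$ and is non-empty; any $A_{k+1}$ drawn from it extends the sequence while keeping every finite union in $M_j^{\star} \subseteq M_j$.

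The main conceptual step is the set-up of the subsemigroup $S$: one has to verify that $S$ is closed under the extended union and is a non-empty compact subsemigroup of $\beta \Pfin$, so that Ellis's theorem applies and delivers an idempotent supported on sets with large minimum. After that is in place, the remainder is routine unwinding of the idempotent identity.
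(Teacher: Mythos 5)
The paper does not prove this statement at all --- it is quoted as a known black box from Hindman and Strauss \cite[Corollary 5.17]{hindman_strauss} --- and your Galvin--Glazer argument (idempotent ultrafilter in the closed subsemigroup of $\beta\Pfin$ concentrating on sets with large minimum, the star-set lemma, then the inductive extraction of $A_1 \prec A_2 \prec \ldots$) is precisely the standard proof given in that reference, and it is correct. The only points you flag as needing verification, that $S$ is a non-empty closed subsemigroup, do go through: non-emptiness follows from the finite intersection property of the sets $\{A : \min A > n\}$, and closure under $\cup$ follows since $\min(A \cup B) = \min(\min A, \min B)$.
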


We can now state our generalization of Hildebrand's theorem:

\begin{thm} \label{thm:main}
Let $A \leq \Q$ be a (multiplicative) subgroup of finite index. Let $A^{\ast} := A \cap \Z$. Then $A \cap (A - 1)$ is an IP-set.
\end{thm}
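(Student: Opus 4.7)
The plan is a two-step application of Hindman's theorem; the main idea is a cancellation trick in the second step.

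\textbf{Step 1.} Fix an injection $\sigma: \Pfin \to \Z$ turning disjoint union into addition, e.g.\ $\sigma(S) := \sum_{i \in S} 2^{i-1}$. Apply \autoref{thm:hindman} to the partition of $\Pfin$ induced by the coloring $\chi(S) := \sigma(S) A \in G := \Q/A$; here $G$ is finite. This produces $A_1 \prec A_2 \prec \ldots$ with $FU((A_i))$ monochromatic. Setting $a_j := \sigma(A_j)$ yields a strictly increasing sequence in $\Z$ with $FS((a_j)) \subseteq \gamma$ for some coset $\gamma \in G$. The main obstacle is that there is no direct way to force $\gamma$ to be the trivial coset $A$.

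\textbf{Step 2.} The resolution is to observe that the ``defect'' $\gamma$ cancels in any ratio: if $x, y \in \gamma$, then $y/x \in A$; so if $a_1$ divides a subsum $\sum_{i \in T} a_i$ for some $T \in \Pfin$, then the integer $\bigl( \sum_{i \in T} a_i \bigr) / a_1$ lies in $A$. To secure this integrality in an IP-compatible way, apply \autoref{thm:hindman} a second time, now to the coloring $c: \Pfin \to \mathbb{Z}/a_1\mathbb{Z}$, $c(S) := \sum_{i \in S,\, i \geq 2} a_i \bmod a_1$. The common value $r$ of $c$ on the resulting $FU$-sequence $B_1 \prec B_2 \prec \ldots$ satisfies $r + r = r$ (since $c$ is additive on disjoint unions), hence $r = 0$. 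Discarding $B_1$ (the only set that can contain the index $1$) and relabelling $S_k := B_{k+1}$ yields $S_1 \prec S_2 \prec \ldots$ with $\min S_k \geq 2$, such that each $b_k := \sum_{i \in S_k} a_i$, as well as every finite sum $\sum_{k \in J} b_k$, is divisible by $a_1$.

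\textbf{Step 3.} Define $d_k := b_k / a_1 \in \Z$. For any finite nonempty $J \subseteq \Z$, writing $T := \bigcup_{k \in J} S_k \subseteq \{2, 3, \ldots\}$,
\[
\sum_{k \in J} d_k \;=\; \frac{\sum_{i \in T} a_i}{a_1}, \qquad \sum_{k \in J} d_k + 1 \;=\; \frac{\sum_{i \in T \cup \{1\}} a_i}{a_1}.
\]
Both numerators are finite sums of $a_i$'s, hence lie in $\gamma$; the denominator $a_1$ also lies in $\gamma$; so both quotients lie in $A$. This shows $FS((d_k)) \subseteq A \cap (A - 1)$, proving $A \cap (A - 1)$ is an IP-set.
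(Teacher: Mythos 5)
Your proof is correct, and while its skeleton matches the paper's (Hindman applied to the coset colouring, then divide every block sum by the first one, then adjoin the index $1$ to produce the ``$+1$''), it secures the crucial divisibility-by-$a_1$ step by a genuinely different mechanism. The paper pre-engineers divisibility into the sequence itself: it constructs a \emph{block-divisible} sequence $s_n$ (recursively, $s_{n+1}$ is the product of all block sums $s_A$ with $\varnothing \neq A \subseteq \{0,\ldots,n\}$), so that after a \emph{single} application of \autoref{thm:hindman} to the coset colouring, $s_{A_1}$ automatically divides every later block sum; dividing by $b_1 = s_{A_1}$ then lands in $A$ exactly as in your Step 3. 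You instead start from the generic sequence $a_j = \sigma(A_j)$ and recover divisibility a posteriori by a second application of \autoref{thm:hindman} to the residue colouring modulo $a_1$, where the idempotency identity $r = c(B_1 \cup B_2) = c(B_1) + c(B_2) = 2r$ forces the common residue to be $0$. (This trick is rightly unavailable in your Step 1: the coset colouring of $\Q/A$ is multiplicative, not additive over disjoint unions, which is exactly why the defect $\gamma$ must be cancelled by division rather than killed outright.) What your route buys is that no special sequence needs to be constructed or verified --- the divisibility becomes a soft consequence of Hindman plus additivity of residues; what the paper's route buys is a single application of Hindman and a shorter argument. Two minor remarks: the restriction $i \geq 2$ in the definition of $c$ is harmless but unnecessary, since you discard $B_1$ anyway; and your final display should note that the quotients are positive integers (which they are, both numerator and denominator being positive with the divisibility established in Step 2), so that they in fact land in $A^{\ast} = A \cap \Z$ and $FS((d_k))$ witnesses the IP-set property inside $\Z$.
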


Hildebrand's proof of \autoref{thm:main_hildebrand} is an application of Ramsey's theorem on $\gcd$-sequences, i.e. sequences where the consecutive differences are the $\gcd$s of the corresponding terms. We will use a similar concept:

\begin{defi}
For a sequence $s_n$ and a finite subset $A \subset \Z$, set
\[
s_A := \sum_{n \in A}~ s_n.
\]
A \emph{block-divisible sequence} is a strictly decreasing sequence $s_n$ in $\Z$ such that for $A,B \in \Pfin$, $s_A$ divides $s_B$ whenever $A \prec B$.
\end{defi}

For our proof, \emph{any} block-divisible sequence will work. Thus, we only need to confirm the existence of block-divisible sequences:

\begin{lem}
There is a block-divisible sequence in $\Z$.
\end{lem}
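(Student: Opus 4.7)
The plan is to construct the sequence by induction, using the fact that at stage $n$ only finitely many partial sums have appeared. I would set $s_1 := 1$, and assuming $s_1 < s_2 < \cdots < s_n$ have been chosen, define
\[
T_n := \operatorname{lcm} \bigl\{ s_A : \emptyset \neq A \subseteq \{1, \ldots, n\} \bigr\},
\]
which is well-defined because only $2^n - 1$ subsets are involved. I would then take $s_{n+1}$ to be any multiple of $T_n$ strictly exceeding $s_n$, for instance $s_{n+1} := (s_n + 1) T_n$.

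To verify block-divisibility, let $A \prec B$ in $\Pfin$ and put $n := \max A$, so that $A \subseteq \{1, \ldots, n\}$ while every $m \in B$ satisfies $m \geq n+1$. By construction $s_A \mid T_n$. For each such $m$, the inclusion $\{1, \ldots, n\} \subseteq \{1, \ldots, m-1\}$ yields $T_n \mid T_{m-1}$, and $T_{m-1} \mid s_m$ by the inductive choice of $s_m$. Hence $s_A \mid s_m$ for every $m \in B$, and summing over $m$ gives $s_A \mid s_B$.

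There is no serious obstacle here; the only point to watch is that $s_A \mid s_m$ must hold for \emph{every} $m > \max A$, not merely the immediate successor. This is automatic from the nested chain $T_n \mid T_{n+1} \mid T_{n+2} \mid \cdots$ together with $T_k \mid s_{k+1}$ at each step. I should also remark that, as literally stated, the lemma asks for a strictly \emph{decreasing} infinite sequence in $\mathbb{Z}^+$, which cannot exist; I read this as a typo for strictly increasing, which is what the construction above produces and what the proof of \autoref{thm:main} will require.
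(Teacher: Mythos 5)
Your proof is correct and takes essentially the same approach as the paper: the paper sets $s_{n+1}$ to be the \emph{product} of all $s_A$ over nonempty $A \subseteq \{0,\ldots,n\}$, whereas you use the lcm scaled by $(s_n+1)$, but the underlying idea (make each new term divisible by every partial sum formed so far, so that $s_A \mid s_m$ for all $m > \max A$ and hence $s_A \mid s_B$) is identical. Your observation that ``strictly decreasing'' must be a typo for ``strictly increasing'' is also consistent with the paper, whose own proof explicitly produces a strictly increasing sequence.
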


\begin{proof}
We construct a sequence as follows:

\begin{align*}
s_0 & := 1 \\
s_{n+1} & := \prod_{A \subseteq \{0,\ldots,n\},A \neq \varnothing} ~ s_A.
\end{align*}

Ignoring the $s_0$ at the beginning, we end up with a strictly increasing sequence fulfilling the desired divisibility condition.
\end{proof}

Now we can show our main result:

\begin{proof}[Proof of \autoref{thm:main}]
Let $N_i^{\prime}$ ($1 \leq i \leq k$) be the (multiplicative) cosets of $A$ in $\Q$.

These give a finite partition $\Z = N_1 \uplus N_2 \uplus \ldots \uplus N_k$ where $N_i = N_i^{\prime} \cap \Z$.

Define a partition $\Pfin = M_1 \uplus M_2 \uplus \ldots \uplus M_k$ by declaring $A \in M_i :\Leftrightarrow s_A \in N_i$

By \autoref{thm:hindman} there is a sequence $A_1 \prec A_2 \prec \ldots$ such that $FU(A_1,A_2,\ldots)$ is contained in one $N_i$ for some $1 \leq i \leq k$.

By the definition of block-divisibility, $s_{A_1}$ divides $s_A$ for all $A \in FU(A_2,A_3\ldots)$ and, consequently, for all $A \in FU(A_1,A_2\ldots)$, too.

Thus, defining $b_i := s_{A_i}$, the members of $FS(b_1,b_2,\ldots)$ all lie in the same coset of $A$ and are divisible by $b_1$. Therefore, setting $a_i := \frac{b_i}{b_1}$, one has

\[
FS(a_1,a_2,\ldots) = FS(1,a_2,a_3,\ldots) \subseteq A^{\ast}.
\]

Furthermore, $FS(1,a_2,a_3,\ldots) = FS(a_2,a_3,\ldots) \cup FS(a_2,a_3,\ldots)+1 \in A^{\ast}$.

We conclude that $FS(a_2,a_3,\ldots) \subseteq A^{\ast} \cap (A^{\ast} - 1)$.

\end{proof}

\paragraph*{Comment 1}

We use the terminology of \autoref{thm:main} to summarize the state of the art concerning possible generalizations:

There are (multiplicative) subgroups $A$ of arbitrary even index in $\Q$ such that $A^{\ast} \cap (A^{\ast}-1) \cap (A^{\ast}-2)$ is empty, as has been shown by Lehmer and Lehmer (\cite[p.103]{lehmer_lehmer}).

Graham (\cite{graham}) proved that there are subgroups of arbitrary index in $\Q$ such that $A^{\ast} \cap \ldots \cap (A^{\ast}-3)$ is empty.

However, if $\Q/A$ is of odd order $k$ it is still an open question if $A^{\ast} \cap (A^{\ast}-1) \cap (A^{\ast}-2)$ is necessarily nonempty. Only in the case $k = 3$ this set is known to be always nonempty as has been shown computationally by Lehmer, Lehmer, Mills and Selfridge (\cite{llms}). Maybe the combinatorial methods presented in this article could help resolving this problem!

\paragraph*{Comment 2}

Some ideas shown in this article are based on notes of the author (\cite{dietzel}) which have not been submitted to any journal.

\bibliographystyle{halpha}

\end{document}